\newcommand{\N}{\mathbb{N}}
\newcommand{\Z}{\mathbb{Z}}
\newcommand{\R}{\mathbb{R}}
\newcommand{\tO}{\mathtt{0}}
\newcommand{\tL}{\mathtt{1}}
\newcommand{\rb}[1]{\left( #1 \right)}
\newcommand{\abs}[1]{\left| #1 \right|}
\newtheorem{theorem}{Theorem}
\newtheorem{lemma}{Lemma}
\newtheorem{remark}{Remark}
\begin{document}
\title{M\"obius Orthogonality for the Zeckendorf Sum-of-Digits Function}

% 'Each author has his or her own set of coordinates.'
\author{Michael Drmota}
\email{michael.drmota@tuwien.ac.at}
\address{Institut f\"ur Diskrete Mathematik und Geometrie
TU Wien\\
Wiedner Hauptstr. 8--10\\
1040 Wien, Austria}
\author{Clemens M\"ullner}
\email{clemens.muellner@tuwien.ac.at}
\address{Institut f\"ur Diskrete Mathematik und Geometrie
TU Wien\\
Wiedner Hauptstr. 8--10\\
1040 Wien, Austria}
\author{Lukas Spiegelhofer}
\email{lukas.spiegelhofer@tuwien.ac.at}
\address{Institut f\"ur Diskrete Mathematik und Geometrie
TU Wien\\
Wiedner Hauptstr. 8--10\\
1040 Wien, Austria}
%

% 'MSC classification, keywords and grant acknowledgements'
\subjclass[2010]{Primary: 11A63, 11N37, Secondary: 11B25, 11L03}
%11A63    Radix representation; digital problems {For metric results, see 11K16}
%11N37   	Asymptotic results on arithmetic functions
%11B25   	Arithmetic progressions [See also 11N13]
%11L03    Trigonometric and exponential sums, general
\date{\today} 
\keywords{Zeckendorf sum-of-digits function, M\"obius randomness, morphic sequences}
\thanks{All authors are supported by the
Austrian Science Foundation FWF, project F5502-N26, which is a part of the Special Research Program ``Quasi Monte Carlo Methods: Theory and Applications''.
Moreover, the authors want to acknowledge support by the project MuDeRa (Multiplicativity, Determinism and Randomness), 
which is a joint project between the ANR (Agence Nationale de la Recherche) and the FWF (Austrian Science Fund).
Furthermore, the authors want to thank Mariusz Lema\'nczyk for very helpful discussions.
}

\begin{abstract}
We show that the (morphic) sequence $(-1)^{s_\varphi(n)}$ is asymptotically orthogonal to all bounded multiplicative functions,
where $s_\varphi$ denotes the Zeckendorf sum-of-digits function.
In particular we have $\sum_{n<N} (-1)^{s_\varphi(n)} \mu(n) = o(N)$, that is, this sequence satisfies the Sarnak conjecture.
\end{abstract}

\maketitle

\section{Introduction}
The Sarnak conjecture \cite{Sarnak2011,Sarnak2012} says that the M\"obius function $\mu(n)$ is asymptotically orthogonal to any deterministic sequence, 
that is, for any sequence $(x_n)$
that can be realized in a deterministic flow we have:
\begin{equation}\label{eqMRP}
\sum_{n< N} \mu(n)x_n = o(N) \qquad (N\to\infty).
\end{equation}
If this equation holds true, we also say that $(x_n)$ satisfies a M\"obius randomness principle.
This conjecture has received a lot of attention during the last years and could be proved for several instances~\cite{Bourgain2013a, Bourgain2013, BSZ2013,
Davenport1937,DK2015, FM2015, Green2012, GT2012, EKL2016, HKLD2014, ELD2014, ELD2015, HLD2015, Karagulyan2015, KL2015, LS2015, Peckner2015, SU2015, Veech2016}.
In particular special automatic sequences were handled recently:~\cite{DT2005, DDM2015, Drmota2014, FKLM2016, Hanna2017, Katai2001, Katai1986, MR2010, MR2015}.
And finally the second author could solve the Sarnak conjecture for all automatic sequences \cite{Muellner2017}.

Automatic sequences constitute an interesting class of deterministic sequences that 
can be characterized in several different ways, see \cite{AS2003}.
For example they can be seen as codings of fixed points of morphisms (on sequences over a finite alphabet) of constant length.
The most prominent example is the Thue--Morse sequence 
$(t_n)_{n\in\mathbb{N}} = 
(\tO\tL\tL\tO\tL\tO\tO\tL\tL\tO\tO\tL\tO\tL\tL\tO\ldots)$ which
is the fixed point of the morphism $\sigma(\tO) = \tO\tL$, $\sigma(\tL) = \tL\tO$ starting with $\tO$ (where the coding is the identity).

It is, thus, a natural question whether a corresponding result holds for \emph{morphic sequences},
which are obtained by general morphisms, followed by a coding.
One of the simplest morphic sequences is the \emph{Fibonacci word} 
\[
(x_n)_{n\geq 1} = 
\left( {2+\left\lfloor {{n}\varphi }\right\rfloor -\left\lfloor {\left({n+1}\right)\varphi }\right\rfloor } \right)_{n\geq 1} = 
(\tO\tL\tO\tO\tL\tO\tL\tO\tO\tL\tO\tO\tL\tO\tL\tO\tO\tL\ldots)
\]
which is the fixed point of the morphism $\sigma(\tO)=\tO\tL$,
$\sigma(\tL)=\tO$, starting with $\tO$ (and where $\varphi = (1+\sqrt 5)/2$ denotes the golden mean).
A M\"obius Randomness Principle~\eqref{eqMRP} for this case, and more generally for Sturmian words, follows from~\cite[Theorem~5.2]{GT2012} by setting
$x_n = \lfloor n \alpha + \beta \rfloor - \lfloor (n-1) \alpha + \beta \rfloor$ or
$x_n = \lceil n \alpha + \beta \rceil - \lceil (n-1) \alpha + \beta \rceil$ for some irrational $\alpha$ and real $\beta$.
%$p(n)=\frac 12\lfloor n\alpha+\beta\rfloor+\frac 12\lfloor (n+1)\alpha+\beta\rfloor$ for arbitrary reals $\alpha$ and $\beta$.
We note that Sturmian words are characterized as binary nonperiodic words having minimal factor complexity: 
there are exactly $k+1$ different factors (contiguous subsequences) of length $k$~\cite[Theorem~10.5.2]{AS2003}.
Automatic sequences, on the other hand, have sublinear factor complexity~\cite[Corollary~10.3.2]{AS2003}.
Moreover, morphic sequences have at most a quadratic number of factors of length $k$~\cite[Corollary~10.4.9]{AS2003}.
Sturmian, automatic and morphic sequences are, therefore, deterministic (the topological entropy being $0$).

Furthermore, we would like to mention a result by Houcein El Abdalaoui, Lemanczyk and De La Rue \cite{ELD2015},
which shows that automorphisms with quasi-discrete spectrum satisfy the Sarnak conjecture. 
This general result also covers some morphic sequences like the Fibonacci word.
However, the sequence $(-1)^{s_\varphi(n)}$ under consideration does not belong to this class \cite{Mariusz_private}.
Additionally, the Sarnak conjecture has been settled for some other morphic sequences 
by Ferenczi \cite{Ferenczi_private}, using different methods.

The purpose of this article is to settle the problem of M\"obius randomness for the sequence
\[
x_n = (-1)^{s_\varphi(n)},
\]
where $s_\varphi(n)$ denotes the Zeckendorf sum-of-digits of $n$, that is,
the minimal number of Fibonacci numbers needed to represent $n$ as their sum.
The sequence $s_\varphi\bmod 2$ is morphic, see~\cite[p. 14]{Bruyere}.
It is given by the following substitution $\sigma$ together with the coding $\pi$
\[
\sigma:\left\{\begin{array}{lll}
a&\mapsto&ab\\b&\mapsto&c\\c&\mapsto& cd\\d&\mapsto&a\end{array}\right\},
\qquad
\pi:\left\{\begin{array}{lll}a&\mapsto&\tO\\b&\mapsto&\tL\\c&\mapsto&\tL\\d&\mapsto&\tO\end{array}\right\},
\]
and we are interested in the fixed point starting with $a$.
Therefore,
\[
s_\varphi(n) \bmod 2=\left(\tO\tL\tL\tL\tO\tL\tO\tO\tL\tO\tO\tO\tL\tL\tO\tO\tO\tL\tO\tL\tL\tL\tO\tO\ldots\right).
\]

Actually, we prove a relation that is more general than (\ref{eqMRP}).
\begin{theorem}\label{Thmain}
Let $s_\varphi(n)$ be the Zeckendorf sum-of-digits function and $m(n)$ a bounded multiplicative function.
Then, we have
\begin{equation}\label{eqThmain}
\sum_{n<N} (-1)^{s_\varphi(n)} m(n) = o(N) \qquad (N\to\infty).
\end{equation}
\end{theorem}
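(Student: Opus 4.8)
The plan is to deduce Theorem~\ref{Thmain} from a bilinear criterion of Daboussi--K\'atai / Bourgain--Sarnak--Ziegler type. Such a criterion reduces orthogonality to \emph{every} bounded multiplicative function $m$ to a statement about the given sequence evaluated along dilations by primes: it suffices to bound, for pairs of distinct primes $p\ne q$,
\[
S_N(p,q) \coloneqq \sum_{n<N}(-1)^{s_\varphi(pn)}(-1)^{s_\varphi(qn)} = \sum_{n<N}(-1)^{s_\varphi(pn)+s_\varphi(qn)},
\]
and to show $S_N(p,q)=o(N)$ as $N\to\infty$ with decay uniform enough in $p,q$ to be summed against $\tfrac1{pq}$ over primes up to a slowly growing cut-off. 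After this (soft) first step the whole problem becomes the estimation of $S_N(p,q)$; note that the baseline case $m\equiv1$ is already covered, corresponding to equidistribution of $s_\varphi(n)\bmod 2$.

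The structural heart is that, because $\varphi$ is a Pisot number, multiplication by a fixed integer $c$ acts on Zeckendorf (Fibonacci/Ostrowski-$\varphi$) representations through a finite-state transducer: feeding the Zeckendorf digits of $n$ from the most significant end into the normalization transducer for $c\cdot(-)$ produces the Zeckendorf digits of $cn$ while carrying only a bounded amount of information (Frougny-type normalization). Running the transducers for $c=p$ and $c=q$ in parallel and recording the output digit sums modulo $2$ exhibits $n\mapsto(-1)^{s_\varphi(pn)+s_\varphi(qn)}$ as a \emph{Fibonacci-automatic} sequence: it is the output of a deterministic automaton $\mathcal A_{p,q}$ reading the Zeckendorf word of $n$. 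One then estimates the average of such a sequence by decomposing $[0,N)$ along the Dumont--Thomas (Zeckendorf) numeration; primitivity of the Fibonacci substitution gives that the number of $n<N$ with a prescribed Zeckendorf prefix is governed by Fibonacci numbers and that the digits equidistribute for the golden-mean measure, so that $S_N(p,q)/N$ converges. The task is to identify the limit as $0$.

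Showing the limit vanishes is the main obstacle, and it is here that the hypothesis $p\ne q$ enters. Following the philosophy of the structure theorem for automatic sequences, one splits $\mathcal A_{p,q}$ into a quasi-periodic component -- the part whose averages reduce to classical Weyl sums $\sum_{n<N}\e(\vartheta n)$ twisted by bounded digit data -- and a complementary carry-driven component. The quasi-periodic component is handled by Gelfond-type exponential-sum estimates for the Zeckendorf sum-of-digits function, of the shape $\sum_{n<N}(-1)^{s_\varphi(n)}\e(\vartheta n)\ll N^{1-\delta}$ uniformly in $\vartheta\in\R$, extended to the linear arguments $pn$ and $qn$; the complementary component has mean $0$ because when $p\ne q$ the two Zeckendorf expansions of $pn$ and $qn$ propagate carries independently enough that no nontrivial invariant correlation between $(-1)^{s_\varphi(pn)}$ and $(-1)^{s_\varphi(qn)}$ survives on average. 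Making this precise is the technical core: one needs effective equidistribution and van der Corput / Fourier estimates adapted to the Fibonacci numeration system that rule out such resonances, and one must control the dependence of every constant on $p$ and $q$ (the automaton $\mathcal A_{p,q}$ grows with $p$ and $q$) so that the output can be returned to the Daboussi--K\'atai reduction.
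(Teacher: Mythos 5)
Your opening reduction is exactly the paper's: Theorem~\ref{Thmain} is deduced from K\'atai's criterion (the qualitative version suffices here --- one only needs $S_N(p,q)=o(N)$ for each fixed pair of distinct primes, so the uniformity in $p,q$ you worry about is not actually required), and the whole problem becomes the correlation estimate \eqref{eqmaincond}. Up to that point you and the paper agree.

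The estimation of $S_N(p,q)$ itself, however, is where your proposal has a genuine gap rather than an alternative proof. You correctly observe that multiplication by a constant acts on Zeckendorf expansions with bounded memory, but the two consequences you then draw are not established. First, the resulting sequence is Fibonacci-automatic, not automatic in the usual sense (the paper's remark shows its $2$-kernel is infinite), so the structure theorem of \cite{Muellner2017} that you invoke for the ``quasi-periodic versus carry-driven'' splitting does not apply; the paper explicitly lists an MRP for Fibonacci-automatic sequences as an \emph{open} extension precisely because no such decomposition is currently available in this numeration system. Second, and more importantly, the statement that the carry-driven component ``has mean $0$ because the carries propagate independently enough'' is exactly the crux of the whole theorem, and you give no argument for it; nor do you justify the claimed uniform bound $\sum_{n<N}(-1)^{s_\varphi(n)}\e(\vartheta n)\ll N^{1-\delta}$ or the reduction of $S_N(p,q)$ to such sums. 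The paper closes this gap by a completely different mechanism: it proves that $f(n)=s_\varphi(pn)+s_\varphi(qn)$ is \emph{quasi-additive} with respect to the Zeckendorf expansion (Lemma~\ref{LeQA}, using the characterization of trailing zero blocks via $\langle (-1)^k n\varphi\rangle$), exhibits an explicit indecomposable block $n''$ with $f(n'')$ odd (Lemma~\ref{LeX}), and then runs a generating-function argument (Lemmas~\ref{LeHLrep}--\ref{LeB}) in which the existence of one odd block forces $|B(x,-1)|<B(1/\varphi,1)$, pushing the dominant singularity of $H_L(x,-1)$ strictly past $1/\varphi$ and yielding the power saving $O(\varphi^{(1-\eta)k})$ by Cauchy's formula. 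Some replacement for that cancellation mechanism --- a concrete reason why the correlation cannot be identically $+1$ on average --- is what your sketch still needs.
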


The proof of Theorem~\ref{Thmain} is based on a general priciple that is due to 
Katai~\cite{Katai1986} (see also Bourgain, Sarnak and Ziegler~\cite{BSZ2013} for a quantitative version).
Suppose that $(x_n)_{n\in\mathbb{N}}$ is a bounded complex valued sequence with values in a finite set and that for every pair $(p,q)$ of different prime numbers we have
\[
\sum_{n< N} x_{pn} \overline {x_{qn}} = o(N).
\]
Then for all bounded multiplicative functions $m(n)$ it follows that 
\[
\sum_{n< N} x_n m(n) = o(N) \qquad (N\to\infty).
\]
Thus, it is sufficient to check the condition
\begin{equation}\label{eqmaincond}
\sum_{n< N} (-1)^{s_\varphi(pn)+s_\varphi(qn)} = o(N) \qquad (N\to\infty),
\end{equation}
which will be done in the main body of the paper.
We note that a statement like~\eqref{eqmaincond} was proved for the usual sum-of-digits function~\cite{Coquet1983,DT2005,Solinas1989}.

\begin{remark}
  It is not always clear if a morphic sequence (given by a substitution of non-constant length) is automatic or not
  (that is, whether it is  the fixed point of a substitution of constant length or not).
  In Section~\ref{sec4} we show that \eqref{eqmaincond} holds for all $q>p\geq 2$.
  In particular this implies the sequences $((-1)^{s_{\varphi}(k^{\lambda} n)})_{n\in \mathbb{N}}$
   are pairwisely different for all $k\geq 2$ and $\lambda \in \mathbb{N}$.
  Therefore, the $k$-kernel of $((-1)^{s_{\varphi}(n)})_{n\in\mathbb{N}}$ is infinite and, thus,  this sequence is not automatic 
  (see again~\cite{AS2003} for more details on this topic).
\end{remark}

In Section~\ref{sec2} we recall some facts about the Zeckendorf expansion of nonnegative integers and prove that there exists $n>0$
such that $s_\varphi(pn)+s_\varphi(qn)$ is odd.
In Section~\ref{sec3} we present a generating function approach for the analysis of sums of the form
\[ \sum_{F_{k-1}\le n< F_k} (-1)^{s_\varphi(pn)+s_\varphi(qn)}, \]
where $F_k$ denotes the $k$-th Fibonacci number.
With the help of these preliminaries the proof of (\ref{eqmaincond}) is then given in Section~\ref{sec4}.

\section{The Zeckendorf Sum-of-digits Function}\label{sec2}
%By the well-known Theorem of Zeckendorf~\cite{Zeckendorf1972} every positive integer $n$ can be represented uniquely as a sum of non-consecutive Fibonacci numbers.
%The number of summands in this representation is called the Zeckendorf sum-of-digits of n, which we denote by $s_\varphi (n)$.

For $k\geq 0$ let $F_k$ be the $k$-th Fibonacci number, that is,
$F_0=0$, $F_1=1$ and $F_k=F_{k-1}+F_{k-2}$ for $k\geq 2$.
By Zeckendorf's Theorem~\cite{Zeckendorf1972} every positive integer $n$ admits a unique representation
%{{{Representation of $n$ as sum of Fibonacci numbers
\[
n=\sum_{i\geq 2}\varepsilon_i F_i
,
\]
%}}}
where $\varepsilon_i\in \{0,1\}$ and $\varepsilon_i=1\Rightarrow \varepsilon_{i+1}=0$.
By this theorem we may write the $i$-th coefficient $\varepsilon_i$ as a function of $n$.
The Zeckendorf sum-of-digits of $n$ is then defined as
%{{{Zeckendorf sum-of-digits
\[
  s_\varphi(n) = \sum_{i\geq 2}\varepsilon_i(n).
\]
%}}}
We set $s_\varphi(0)=0$.
We note that $s_\varphi(n)$ is the least $k$ such that $n$ is the sum of $k$ Fibonacci numbers.

The main purpose of this section is to prove that there exist integers $n',n''>0$ such that $s_\varphi(pn')+s_\varphi(qn')$ is even and $s_\varphi(pn'')+s_\varphi(qn'')$ is odd.
Actually we will prove a slightly more general property in Lemma~\ref{LeX}.

For the proof of Lemma~\ref{LeX} we need some preliminaries.
Let $\{x\}$ denote the fractional part of $x$ and $\langle x\rangle \coloneqq \{x + \frac{1}{2}\}-\frac{1}{2} \in [-\frac{1}{2},\frac{1}{2})$ 
denote the signed distance to the nearest integer.
Obviously, $\{x\} \equiv \langle x \rangle \bmod 1$ holds for all $x\in \R$.
We denote by $\varepsilon_k(n)$ the $n$-th digit of $n$ in the Zeckendorf expansion and, furthermore, 
\begin{align*}
  v(n,k) = \sum_{2 \leq i < k} \varepsilon_i(n) F_i.
\end{align*}
We define
\begin{align*}
  R_k(u) \coloneqq (-1)^k u \varphi + \left\{\begin{array}{cl} \vspace{2mm} \left[-\frac{1}{\varphi^{k-1}},\frac{1}{\varphi^k}\right), & 0 \leq u < F_{k-1};\\ 
				\left[-\frac{1}{\varphi^{k+1}},\frac{1}{\varphi^k}\right), & F_{k-1} \leq u < F_{k}, \end{array}\right.
\end{align*}
where $\varphi =  (\sqrt{5}+1)/2$.
This allows us to detect the last $k$ digits of $n$ in the Zeckendorf expansion
(see for example~\cite[Proposition~5.7]{Spiegelhofer2014}).

\begin{lemma}
  Let $k\geq 2, 0 \leq u < F_k$ and $n \geq 0 $. Then we have
  \begin{align*}
    v(n,k) = u
  \end{align*}
  if and only if
  \begin{align*}
    (-1)^k n \varphi \in R_k(u) + \Z.
  \end{align*}
\end{lemma}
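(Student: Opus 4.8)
\emph{Overview of the plan.} I would prove the two implications separately; the forward one (``$v(n,k)=u\Rightarrow\dots$'') is a direct computation, while the reverse one reduces to showing that the $F_k$ sets $R_k(u)+\Z$, $0\le u<F_k$, are pairwise disjoint.

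\emph{Forward implication.} Write $\psi\coloneqq-1/\varphi$ and use the classical identity $F_i\varphi=F_{i+1}-\psi^i$, so that $F_i\varphi\equiv-\psi^i\pmod 1$. Summing over the Zeckendorf digits of $n$ gives $n\varphi\equiv-\sum_{i\ge 2}\varepsilon_i(n)\psi^i\pmod 1$. Since $u<F_k$, uniqueness of the Zeckendorf expansion shows that $v(n,k)=u$ is equivalent to $\varepsilon_i(n)=\varepsilon_i(u)$ for all $2\le i<k$; subtracting the analogous sum for $u$ then yields
\[
(-1)^kn\varphi\;\equiv\;(-1)^ku\varphi-\varphi^{-k}\,g(n)\pmod 1,\qquad g(n)\coloneqq\sum_{j\ge 0}\varepsilon_{k+j}(n)\,\psi^j .
\]
So it remains to check that $-\varphi^{-k}g(n)$ lies in the half-open interval that is added to $(-1)^ku\varphi$ in the definition of $R_k(u)$. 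Here the no-two-consecutive-ones condition is the key input: a geometric-series estimate gives $g(n)\in(-1,\varphi)$ in all cases, whence $-\varphi^{-k}g(n)\in(-\varphi^{-(k-1)},\varphi^{-k})$; moreover, when $u\ge F_{k-1}$ one has $\varepsilon_{k-1}(n)=1$, which forces $\varepsilon_k(n)=0$ and hence $g(n)\in\psi\cdot(-1,\varphi)=(-1,\varphi^{-1})$, whence $-\varphi^{-k}g(n)\in(-\varphi^{-(k+1)},\varphi^{-k})$. In both cases this is contained in the prescribed interval, finishing this direction.

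\emph{Reverse implication.} It suffices to prove that the $R_k(u)+\Z$, $0\le u<F_k$, are pairwise disjoint: since the forward implication already places $(-1)^kn\varphi$ in $R_k(v(n,k))+\Z$, any $u$ with $(-1)^kn\varphi\in R_k(u)+\Z$ must then equal $v(n,k)$. To establish disjointness, take $0\le u'<u<F_k$ and put $m\coloneqq u-u'$. A nonempty intersection $R_k(u)\cap(R_k(u')+\Z)$ forces $(-1)^km\varphi$ to lie, modulo $\Z$, in the \emph{open} interval $I_k(u')-I_k(u)$, the difference of the two defining intervals; in every case this open interval is contained in $(-\varphi^{2-k},\varphi^{2-k})$, and when $m\ge F_{k-1}$ its right endpoint even improves to $\varphi^{1-k}$ (because then $u\ge F_{k-1}$). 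Now I invoke the good Diophantine approximation of $\varphi$: its continued fraction is $[1;1,1,\dots]$, the convergent denominators are the $F_j$ with $\abs{\langle F_j\varphi\rangle}=\varphi^{-j}$, and the best-approximation property gives $\abs{\langle m\varphi\rangle}\ge\varphi^{1-j}$ whenever $1\le m<F_j$. For $m<F_{k-1}$ this gives $\abs{\langle m\varphi\rangle}\ge\varphi^{2-k}$ and excludes the intersection; for $F_{k-1}\le m<F_k$ one writes $m=F_{k-1}+m'$ with $0\le m'<F_{k-2}$, uses $(-1)^kF_{k-1}\varphi\equiv\varphi^{1-k}\pmod 1$, applies the same bound to $m'$, and finishes with a short manipulation based on $\varphi^{a}+\varphi^{a+1}=\varphi^{a+2}$.

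\emph{Main obstacle.} The forward implication and the reduction are routine; the disjointness is the real content. Within it, the delicate point is not the (classical) Diophantine input but the endpoint bookkeeping: the extremal values ($m=F_{k-2}$ in the first case, $m=F_{k-1}$ in the second) make $\abs{\langle m\varphi\rangle}$ \emph{equal} to $\varphi^{2-k}$, respectively $\varphi^{1-k}$, so one must verify carefully that these boundary values fall on the \emph{excluded} side of the half-open intervals $R_k(u)$. If one prefers to avoid the case analysis, an alternative is to check via $F_{k-1}\varphi+F_{k-2}=\varphi^{k-1}$ that $\sum_{u=0}^{F_k-1}\abs{R_k(u)}=1$, and then to deduce disjointness from the forward implication together with equidistribution of $(n\varphi)_{n\ge 0}$ modulo $1$ and the density of the golden-ratio digit sums in their intervals; these force the closures $\overline{R_k(u)}$ to meet only at endpoints, and half-openness concludes.
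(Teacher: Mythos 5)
Your argument is correct. Note that the paper does not actually prove this lemma at all --- it is stated as a known digit-detection property with a pointer to \cite[Proposition~5.7]{Spiegelhofer2014} --- so there is no in-paper proof to compare against; what you have written is a legitimate self-contained substitute. The structure (forward direction by the identity $F_i\varphi=F_{i+1}-\psi^i$ plus the geometric-series bounds $g(n)\in(-1,\varphi)$, resp.\ $g(n)\in(-1,\varphi^{-1})$ when $\varepsilon_{k-1}(n)=1$; reverse direction by disjointness of the $F_k$ translated intervals) is the standard and correct way to establish this, and I checked the two points you flag as delicate: (i) the interval difference $I_k(u')-I_k(u)$ is indeed open, so the extremal cases $\abs{\langle m\varphi\rangle}=\varphi^{2-k}$ (at $m=F_{k-2}$) and the endpoint $\varphi^{1-k}$ (at $m=F_{k-1}$, $m'=0$) land outside, and for $m=F_{k-1}+m'$ with $m'\geq 1$ the inequality $\varphi^{1-k}-\varphi^{3-k}\leq-\varphi^{2-k}$ holds with equality, again excluded by openness; (ii) one must also rule out that $(-1)^km\varphi$ hits a \emph{translate} $J\pm1$ of the difference interval --- this is fine because $\langle m\varphi\rangle\in(-\varphi^{-1},\varphi^{-2})$ and $1-\varphi^{2-k}\geq\varphi^{-1}$ in the relevant range (the only tight case, $k=3$, has $F_k=2$ and a single pair to check directly). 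Your closing ``total length $1$'' observation ($F_{k-1}\varphi^{2-k}+F_{k-2}\varphi^{1-k}=1$) is a nice sanity check, though as a replacement for the disjointness argument it would need the density/equidistribution step spelled out more carefully than a sketch allows; the explicit Diophantine route you give in the main text is the safer one to write up.
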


We want to show that the functions $s_\varphi(pn)+s_\varphi(qn)$ have a quasi-additive property with respect to the Zeckendorf expansion (compare to~\cite{KW2016}).
We say that $n_1$ and $n_2$ are $r$-separated at position $k$ if 
$\varepsilon_i(n_1) = 0$ for $i\ge k-r$ and $\varepsilon_i(n_2) = 0$ for $i \le k+r$.
In particular this means that 
\[
\varepsilon_i(n_1+n_2) = \left\{ \begin{array}{ll}
\varepsilon_i(n_1) & \mbox{for $i < k-r$}, \\
0 & \mbox{for $k-r\le i \le k+r$}, \\
\varepsilon_i(n_2) & \mbox{for $i > k+r$}.
\end{array}\right.
\]
We define a shift operator $S: \mathbb{N}\to \mathbb{N}$ by
\begin{align*}
  S(n) = \sum_{k\geq 2} F_{k+1} \varepsilon_k(n).
\end{align*}

Furthermore, we say that a function $f(n)$ is quasi-additive with respect to the
Zeckendorf expansion if there exists $r\ge 0$ such that 
\[
f(n_1+n_2) = f(n_1) + f(n_2)
\]
for all integers $n_1,n_2$ that are $r$-separated at some position $k$ and 
\[
 f(n) = f(S(n))
\]
for all integers $n$ such that $v(n,r) = 0$.
Note that $s_\varphi$ is $0$-quasiadditive with respect to the Zeckendorf expansion.
\begin{lemma}\label{LeQA}
Suppose that  $q>p \geq 2$ are integers and that $f(n)=s_\varphi(pn)+s_\varphi(qn)$.
Then $f$ is quasi-additive with respect to the Zeckendorf expansion.
\end{lemma}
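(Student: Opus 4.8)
The plan is to establish the two defining properties of quasi-additivity — the additive property for $r$-separated integers and the shift-invariance property — directly from the behaviour of multiplication by $p$ and by $q$ on Zeckendorf expansions, choosing $r$ large enough (depending only on $q$) to absorb the carries introduced by the multiplication.

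First I would analyse how multiplication by a fixed integer $m$ interacts with the Zeckendorf expansion. The key observation is that multiplication by $m$ is "local" up to a bounded window: if $n = \sum_i \varepsilon_i(n) F_i$, then $mn = \sum_i \varepsilon_i(n) (mF_i)$, and each $mF_i$ has a Zeckendorf expansion supported in a bounded interval around index $i$ — roughly $[i - c_m, i + c_m]$ for some constant $c_m$ depending only on $m$ (since $F_{i+c} \le mF_i < F_{i+c+1}$ forces $c \le \log_\varphi m + O(1)$). Consequently, if a block of digits $\varepsilon_i(n)$ vanishes for $k - r \le i \le k + r$ with $r$ sufficiently large relative to $c_p$ and $c_q$, then when we split $n = n_1 + n_2$ with $n_1$ carrying the low digits and $n_2$ the high digits, the contributions of $mn_1$ and $mn_2$ to the Zeckendorf expansion of $mn = mn_1 + mn_2$ occupy disjoint index ranges separated by a gap, so no carry propagation links them. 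Formally I would invoke the digit-detection Lemma (the one preceding Lemma~\ref{LeQA}) together with the $R_k(u)$ machinery: since $v(n, r) = 0$-type conditions translate, via $(-1)^k n\varphi \in R_k(u) + \Z$, into statements about $\{n\varphi\}$ lying in explicit intervals, and multiplication by $m$ acts on $n\varphi$ by scaling, one can track precisely which digits of $mn$ are determined by which digits of $n$. This yields $s_\varphi(mn) = s_\varphi(mn_1) + s_\varphi(mn_2)$ for each $m \in \{p, q\}$ when $n_1, n_2$ are $r$-separated at position $k$ with $r = r(q)$, and adding the two identities gives $f(n_1 + n_2) = f(n_1) + f(n_2)$.

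Second, for the shift-invariance $f(n) = f(S(n))$ under $v(n, r) = 0$: the operator $S$ replaces each $F_k$ by $F_{k+1}$ in the expansion, i.e. it shifts all digits up by one index. The hypothesis $v(n, r) = 0$ means the lowest $r$ digits of $n$ vanish, so $n$ (and hence $pn$, $qn$, whose low-index contributions also vanish once $r > c_p, c_q$) has "room at the bottom" for the shift to be clean. One checks that $m \cdot S(n)$ and $S(mn)$ have the same digit sum: writing $mn = \sum_i \delta_i F_i$ with the $\delta_i$ supported above index $r - c_m$, the identity $mF_{i+1}$ versus $F_{\cdot + 1}(mF_i)$ can be reconciled because the golden-mean shift essentially commutes with multiplication up to carries that, again, are confined to a bounded window that the gap accommodates — here one uses that $F_{i+1}/F_i \to \varphi$ and the self-similar structure of Zeckendorf expansions under index shift (this is exactly the kind of statement that makes $s_\varphi$ itself $S$-invariant and $0$-quasi-additive, as noted in the excerpt). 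So $s_\varphi(p \cdot S(n)) = s_\varphi(S(pn)) = s_\varphi(pn)$ and likewise for $q$, giving $f(S(n)) = f(n)$.

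The main obstacle I anticipate is making the carry-control rigorous rather than heuristic: one must pin down explicitly, as a function of $p$ and $q$ only, how far carries can propagate when passing from $\varepsilon_i(n)$ to the digits of $pn$ and $qn$, and verify that a single uniform $r$ works simultaneously for (i) the separation identity and (ii) the shift identity. The clean way to do this is through the interval description $R_k(u)$ and the detection Lemma: express "the last $k$ digits of $mn$ are such-and-such" as a condition $(-1)^k m n \varphi \in \text{(explicit interval)} + \Z$, note that scaling $n \mapsto mn$ and truncating commute up to an error controlled by $\|mn\varphi\|$ versus $m\|n\varphi\|$, and choose $r$ so that the vanishing digit block forces $n\varphi$ into an interval small enough that this error cannot reach index $k$. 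Once $r$ is fixed this way, both required identities follow from the same estimate, and I would present the computation for $s_\varphi(mn)$ with generic $m$, then specialise to $m = p$ and $m = q$ and add.
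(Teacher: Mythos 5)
Your plan coincides with the paper's proof: reduce to $n\mapsto s_\varphi(mn)$ for a single factor $m$, pick $r$ with $\varphi^{r-1}>m$, use the digit-detection lemma (the $R_k(u)$ intervals) to show the high part satisfies $v(mn_2,k+1)=0$ while a crude size bound gives $mn_1<F_k$, so that $mn_1$ and $mn_2$ are $0$-separated and the $0$-quasi-additivity of $s_\varphi$ finishes the separation identity. For shift-invariance the paper makes your ``commutes up to carries'' step exact by writing $S(n)=\varphi n-\sum_k\varepsilon_k(n)(-1/\varphi)^k$ and estimating $\lvert S(mn)-mS(n)\rvert<1$, hence $S(mn)=mS(n)$; this is essentially the same approach and your outline is correct.
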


\begin{proof}
Since the sum of quasi-additive functions is quasi-additive,
it suffices to show the property for $n\mapsto s_\varphi(mn)$.
First, we need to find $r$ such that if $n_1$ and $n_2$ are $r$-separated, $mn_1$ and $mn_2$ are still $0$-separated, 
which yields the claim by the $0$-quasi-additivity of $s_\varphi$.

Choose $r$ in such a way that $\varphi^{r-1}>m$
and assume that $n_1$ and $n_2$ are $r$-separated at $k$.
Then $v(n_2,k+r+1)=0$. % and that $\varphi^{r-2}>m$. %to be adjusted.
We obtain
\[\bigl\langle(-1)^{k+r+1} n_2 \varphi\bigr\rangle
\in\left[-\frac 1{\varphi^{k+r}},\frac 1{\varphi^{k+r+1}}\right)
\subseteq \left[-\frac 1{m\varphi^{k+1}},\frac 1{m\varphi^{k+1}}\right),
\]
therefore,
\[\bigl\langle(-1)^{k+1} mn_2 \varphi\bigr\rangle
=m(-1)^{r}\langle(-1)^{k+r+1} n_2 \varphi\bigr\rangle
%\subseteq \left[-\frac 1{\varphi^{k+1}},\frac 1{\varphi^{k+1}}\right)
\subseteq \left[-\frac 1{\varphi^{k}},\frac 1{\varphi^{k+1}}\right),
\]
from which it follows that $v(mn_2,k+1)=0$, that is, $\varepsilon_i(mn_2)=0$ for $i\leq k$.

Moreover, we have $\varepsilon_i(n_1)=0$ for $i\geq k-r$ by assumption.
This implies $n_1<F_{k-r}$, therefore, $mn_1<\varphi^{r-1}F_{k-r}\leq F_{k-r+r-1+1}=F_{k}$,
which implies $\varepsilon_i(mn_1)=0$ for $i\geq k$.
Therefore, $mn_1$ and $mn_2$ are $0$-separated at $k$.

A simple computation shows
\begin{align*}
  F_{k+1} = \varphi F_k - \rb{-\frac{1}{\varphi}}^{k},
\end{align*}
which implies
\begin{align*}
  S(n) = \varphi n - \sum_{k\geq 2} \varepsilon_k(n) \rb{-\frac{1}{\varphi}}^{k}.
\end{align*}
Thus, we find for $\varphi^{r-3} >m$ and $v(n,r) = 0$
\begin{align*}
  \abs{S(mn) - m S(n)} &= \abs{\varphi m n - \sum_{k\geq 2}\varepsilon_k(mn) \rb{-\frac{1}{\varphi}}^k - m \varphi n + m \sum_{k\geq 2}\varepsilon_k(n) \rb{-\frac{1}{\varphi}}^k}\\
    &\leq \sum_{k\geq 2} \varepsilon_k(mn) \frac{1}{\varphi^k} + m \sum_{k\geq r} \varepsilon_k(n) \frac{1}{\varphi^k}\\
    &\leq \frac{1}{\varphi^2} \frac{1}{1-\frac{1}{\varphi^2}} + m \frac{1}{\varphi^r} \frac{1}{1-\frac{1}{\varphi^2}}\\
    &= \frac{1}{\varphi} + m \frac{1}{\varphi^{r-1}} < \frac{1}{\varphi} + \frac{1}{\varphi^2} = 1.
\end{align*}
This gives $S(mn) = m S(n)$ for $\varphi^{r-3} > m$ and $v(n,r) = 0$ and by the $0$-quasi-additivity of $s_{\varphi}(.)$ 
\begin{align*}
  s_{\varphi}(m S(n)) = s_{\varphi}(S(mn)) = s_{\varphi}(mn),
\end{align*}
which shows that $n \mapsto s_{\varphi}(mn)$ is quasi-additive.
\end{proof}

\begin{lemma}\label{lem:multiplication_low_digits}
  Let $q>p \geq 1$ be integers and $n,k \in \N$ such that $v(n,k) = 0$ and $\varphi^{k-1} > 2 q$.
  Then we have
  \begin{align*}
    \max \{\ell : v(qn,\ell) = 0\} \leq \max \{\ell: v(pn,\ell) = 0\}.
  \end{align*}
\end{lemma}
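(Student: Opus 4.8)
The plan is to reduce the statement to the criterion proved just above, namely that for an integer $m\ge 1$ and $\ell\ge 2$ one has $v(m,\ell)=0$ if and only if $(-1)^\ell m\varphi\in R_\ell(0)+\Z$, where $R_\ell(0)=\bigl[-\varphi^{-(\ell-1)},\varphi^{-\ell}\bigr)$. For $\ell\ge 3$ the interval $R_\ell(0)$ is contained in $[-\tfrac12,\tfrac12)$, so in that range the criterion becomes $\langle(-1)^\ell m\varphi\rangle\in R_\ell(0)$, that is, $(-1)^\ell\langle m\varphi\rangle\in R_\ell(0)$, using $\langle-z\rangle=-\langle z\rangle$ for irrational $z$. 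The hypothesis $\varphi^{k-1}>2q$ will be used to make $\langle n\varphi\rangle$ small enough that multiplication by $p$ and by $q$ commutes with reduction modulo $1$; after that the claim reduces to a one-line monotonicity argument about the intervals $R_\ell(0)$.

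First I would set $x\coloneqq\langle n\varphi\rangle$. Applying the criterion to the pair $(n,k)$ (note that $\varphi^{k-1}>2q\ge 4$ forces $k\ge 4$, so $R_k(0)\subseteq[-\tfrac12,\tfrac12)$) gives $(-1)^k x\in\bigl[-\varphi^{-(k-1)},\varphi^{-k}\bigr)$, whence $|x|\le\varphi^{-(k-1)}<1/(2q)$. Therefore $q|x|<\tfrac12$ and, a fortiori, $p|x|<\tfrac12$. Writing $n\varphi=M+x$ with $M\in\Z$, we obtain for every $\ell$ that $(-1)^\ell qn\varphi\equiv(-1)^\ell qx\pmod 1$ with $(-1)^\ell qx\in(-\tfrac12,\tfrac12)$, so $\langle(-1)^\ell qn\varphi\rangle=(-1)^\ell qx$, and likewise $\langle(-1)^\ell pn\varphi\rangle=(-1)^\ell px$.

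Next, let $\ell_q\coloneqq\max\{\ell:v(qn,\ell)=0\}$; when $n\ge 1$ this is finite and $\ge 2$ (the case $n=0$ being trivial), and it suffices to show $v(pn,\ell_q)=0$, since this yields $\ell_q\le\max\{\ell:v(pn,\ell)=0\}$. If $\ell_q=2$ there is nothing to prove, as $v(m,2)$ is an empty sum for every $m$. If $\ell_q\ge 3$, then $R_{\ell_q}(0)\subseteq[-\tfrac12,\tfrac12)$, and by the preceding paragraph $v(qn,\ell_q)=0$ is equivalent to $(-1)^{\ell_q}qx\in\bigl[-\varphi^{-(\ell_q-1)},\varphi^{-\ell_q}\bigr)$. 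Putting $y\coloneqq(-1)^{\ell_q}x$, we have $qy\in\bigl[-\varphi^{-(\ell_q-1)},\varphi^{-\ell_q}\bigr)$, and it remains to check that $py$ lies in the same interval. Here the inequality $q>p\ge 1$ does the work: if $y\ge 0$ then $0\le py\le qy<\varphi^{-\ell_q}$, and if $y<0$ then $-\varphi^{-(\ell_q-1)}\le qy<py<0$. In both cases $(-1)^{\ell_q}px=py\in R_{\ell_q}(0)$, so $v(pn,\ell_q)=0$, as wanted.

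There is no serious obstacle once the criterion from the previous lemma is available; the plan is essentially bookkeeping. The only delicate point is to match the asymmetric interval $R_\ell(0)$ to the two sign regimes of $y$, and to notice that $\varphi^{k-1}>2q$ is exactly what one needs, both to force $k\ge 3$ (so that the $\langle\,\cdot\,\rangle$-form of the criterion is valid at level $k$) and to linearize $\langle qn\varphi\rangle$ and $\langle pn\varphi\rangle$ at the same time. The degenerate case $n=0$ and the difference between strict and non-strict inequalities at the interval endpoints — immaterial since $m\varphi$ is irrational for $m\ge 1$ — can each be handled in one sentence.
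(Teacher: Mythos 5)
Your proof is correct and follows essentially the same route as the paper: both use the digit-detection criterion via the intervals $R_\ell(0)$, use $\varphi^{k-1}>2q$ to ensure $q\lvert\langle n\varphi\rangle\rvert<\tfrac12$ so that multiplication by $p$ and $q$ commutes with reduction to the nearest integer, and conclude by noting that $R_\ell(0)$ is an interval containing $0$, so membership is preserved when scaling by $p/q\in(0,1]$. Your write-up is merely more explicit than the paper's at the final monotonicity step (the sign cases) and in handling the degenerate values of $\ell$.
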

\begin{proof}
  We find that $v(n,k) = 0$ implies 
  \begin{align*}
    \{(-1)^k n \varphi\} \in \left[-\frac{1}{\varphi^{k-1}},\frac{1}{\varphi^k}\right) + \Z \subseteq \left[-\frac{1}{2 q}, \frac{1}{2q}\right) + \Z.   
  \end{align*}
  Or, phrasing it in terms of the signed distance to the nearest integer,
  \begin{align*}
    \langle (-1)^k n \varphi\rangle \in \left[-\frac{1}{\varphi^{k-1}},\frac{1}{\varphi^k}\right) \subseteq \left[-\frac{1}{2 q}, \frac{1}{2q}\right).
  \end{align*}

  This gives
  \begin{align*}
    \langle (-1)^{\ell} q n \varphi\rangle &= (-1)^{k + \ell} q \cdot \langle(-1)^k n \varphi\rangle\\
    \langle (-1)^{\ell} p n \varphi\rangle &= (-1)^{k + \ell} p \cdot \langle(-1)^k n \varphi\rangle
  \end{align*}
  for all $\ell \geq 0$.
%   This implies
%   \begin{align*}
%     \sgn(\langle (-1)^{\ell} q n \varphi\rangle) &= \sgn(\langle (-1)^{\ell} p n \varphi\rangle)\\
%     \abs{\langle (-1)^{\ell} q n \varphi\rangle} &\geq \abs{\langle (-1)^{\ell} p n \varphi\rangle}.
%   \end{align*}

  Thus, we find that $v(qn,\ell) = 0$ if and only if
  \begin{align*}
    q (-1)^{k + \ell} \langle (-1)^k n \varphi\rangle \in \left[-\frac{1}{\varphi^{\ell-1}},\frac{1}{\varphi^{\ell}}\right).
  \end{align*}
  However, this implies
  \begin{align*}
    p (-1)^{k + \ell} \langle (-1)^k n \varphi\rangle \in \left[-\frac{1}{\varphi^{\ell-1}},\frac{1}{\varphi^{\ell}}\right)
  \end{align*}
  which is equivalent to $v(pn,\ell) = 0$.
\end{proof}

\begin{lemma}\label{LeX}
  For all $m \geq 2$ and integers  $q>p \geq 2$ there exists $r$ such that $n \mapsto s_{\varphi}(pn)$ and $n\mapsto s_{\varphi}(qn)$ are $r$-quasi-additive,
  and there exist positive integers $n',n''$ such that $v(n',r) = v(n'',r) = 0$,
  $n'$ and $n''$ can not be decomposed into two positive integers that are $r$-separated and
  \begin{align*}
    s_\varphi(qn')&\equiv s_\varphi(qn') \bmod m,\\
    s_\varphi(qn'') &\not \equiv s_\varphi(pn'') \bmod m.
  \end{align*}
\end{lemma}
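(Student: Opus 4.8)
The plan is to reduce the statement to a single non‑vanishing assertion and then manufacture the two required configurations by a gluing construction.

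\emph{Setup.} By the proof of Lemma~\ref{LeQA}, both $n\mapsto s_\varphi(pn)$ and $n\mapsto s_\varphi(qn)$ are $\rho$‑quasi‑additive for some $\rho$; fix $r\ge\rho$ sufficiently large in terms of $q$ — in particular with $\varphi^{r-3}>q$ and $\varphi^{r-1}>2q$ and with $r$ exceeding $\log_\varphi q$ by a large absolute constant. Then Lemmas~\ref{LeQA} and~\ref{lem:multiplication_low_digits} are available, and the identity $S^t(mn)=m\,S^t(n)$ for $m\in\{p,q\}$, $t\ge0$ and $v(n,r)=0$ (obtained by iterating the identity $S(mn)=mS(n)$ proved inside Lemma~\ref{LeQA}) holds. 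Put $\delta(n)\coloneqq s_\varphi(qn)-s_\varphi(pn)$; as a difference of $r$‑quasi‑additive functions, $\delta$ is itself $r$‑quasi‑additive. Call a positive integer a \emph{block} if it cannot be written as $n_1+n_2$ with $n_1,n_2>0$ that are $r$‑separated at some position, and if moreover $v(\cdot,r)=0$. We must find blocks $n',n''$ with $\delta(n')\equiv 0$ and $\delta(n'')\not\equiv 0\pmod m$.

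\emph{A gluing construction.} Given blocks $B_1,\dots,B_s$, one can assemble a block $B$ with $\delta(B)=\sum_i\delta(B_i)$: stack $B_1$ at the bottom and each subsequent $B_{i+1}$ shifted up so that the zero‑run separating it from the stack built so far has some length $t$. If every such $t$ is at most $2r$, then $B$ remains indecomposable, and since its lowest digits are those of $B_1$ it satisfies $v(B,r)=0$; if moreover every $t$ is at least a fixed multiple of $\log_\varphi q$, then by $S^t(mn)=m\,S^t(n)$ the Zeckendorf expansions of the $p$‑images (resp.\ $q$‑images) of the various shifted blocks are pairwise non‑overlapping, whence $s_\varphi(pB)=\sum_i s_\varphi(pB_i)$ and likewise for $q$. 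Both requirements on $t$ can be met at once precisely because $r$ was taken large relative to $\log_\varphi q$. Iterating with copies of a single block $B_0$ therefore yields, for every $j\ge1$, a block of $\delta$‑value $j\,\delta(B_0)$.

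\emph{Reductions.} For any $k\ge r$ the Fibonacci number $F_k$ is a block, so gluing $m$ copies of $F_k$ produces a block $n'$ with $\delta(n')=m\,\delta(F_k)\equiv0\pmod m$; this settles $n'$. For $n''$ it now suffices to exhibit a single integer $n_*>0$ with $v(n_*,r)=0$ and $\delta(n_*)\not\equiv 0\pmod m$. Indeed, decompose $n_*$ greedily into indecomposable pieces $n_*=n_1+\cdots+n_s$ that are pairwise $r$‑separated; since $v(n_*,r)=0$ the lowest digit of $n_*$, hence of the bottom piece $n_1$, sits at position $\ge r$, so every $n_i$ is a block and $\delta(n_*)=\sum_i\delta(n_i)$ by quasi‑additivity. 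As $\delta(n_*)\not\equiv0\pmod m$, some $n_i$ has $\delta(n_i)\not\equiv 0\pmod m$, and we take $n''\coloneqq n_i$.

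\emph{The core estimate and the main obstacle.} What remains is to find $n_*>0$ with $v(n_*,r)=0$ and $s_\varphi(qn_*)\not\equiv s_\varphi(pn_*)\pmod m$; equivalently, to show that $\delta$ is not identically $\equiv0\pmod m$ on the integers whose Zeckendorf expansion has no digit below position $r$. This is the only point where the hypothesis $q>p$ is genuinely used, and it is the heart of the matter. The plan is to evaluate $\delta$ on explicit sparse candidates on which multiplication by $p$ and by $q$ can be carried out digit by digit — single Fibonacci numbers $F_k$ and two‑term sums $F_k+F_{k+j}$ with $k\ge r$ and bounded $j$ — for which $\delta$ becomes an eventually periodic function of $k$; here Lemma~\ref{lem:multiplication_low_digits}, which says $qn_*$ has no more vanishing low digits than $pn_*$, pins down the low ends of the expansions of $pn_*$ and $qn_*$ and lets one determine $\delta$ on these candidates and rule out that all its values are divisible by $m$ — in fact one expects to realise $\delta(n_*)=\pm1$, which disposes of every $m\ge2$ simultaneously. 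The main obstacle is to make this computation uniform in the pair $(p,q)$, avoiding an unbounded case distinction; this is presumably where the generating‑function machinery of Section~\ref{sec3} enters.
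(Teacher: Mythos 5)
Your reductions are sound and essentially coincide with the paper's: the gluing of $m$ shifted copies of a single block to produce $n'$ with $\delta(n')\equiv 0\pmod m$ is exactly the paper's first step (the paper uses $n_i=F_{r_0+1+i(2r_0+2)}$, pairwise $r_0$-separated but not $(r_0+1)$-separated, and sets $r=r_0+1$), and the passage from ``some $n_*$ with $v(n_*,r)=0$ and $\delta(n_*)\not\equiv 0$'' to an indecomposable $n''$ by splitting into blocks and arguing by contradiction is also the paper's argument verbatim.

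However, the last paragraph is where the lemma actually lives, and there you have a genuine gap: you do not prove that such an $n_*$ exists, you only describe a plan and explicitly defer its hardest point. Two concrete problems. First, your proposed candidates are too weak: by the shift identity $S(mn)=mS(n)$, the quantity $\delta(F_k)$ is \emph{constant} for large $k$, and $\delta(F_k+F_{k+j})$ depends only on $j$ for large $k$; with $j$ bounded this gives finitely many values with no argument that any of them is $\not\equiv 0\pmod m$ (and Lemma~\ref{lem:multiplication_low_digits} alone does not determine these values — it only compares the lengths of the runs of low zeros of $pn$ and $qn$). Second, your guess that the missing uniformity ``is presumably where the generating-function machinery of Section~\ref{sec3} enters'' inverts the logical structure of the paper: Section~\ref{sec4} \emph{uses} Lemma~\ref{LeX} to show that the singularity of $H_L(x,-1)$ moves off $|x|=1/\varphi$; the generating functions cannot be used to prove the lemma. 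What the paper actually does is engineer a single carry in the multiplication by $q$ that is absent in the multiplication by $p$: using periodicity of $(F_n)$ mod $q$ it finds $n_1$ with $qn_1\in[F_k,F_{k+1})$ and $pn_1<F_k$; using density of $\{n\varphi\}$ together with the interval description $R_k(u)$ it finds $n_2$ with $v(qn_2,k+5)=F_{k+1}$ while Lemma~\ref{lem:multiplication_low_digits} forces $v(pn_2,k+1)=0$. Adding, the digits $F_{k+1}+F_k$ merge into $F_{k+2}$ in the $q$-image but nothing merges in the $p$-image, so $\delta(n_1+n_2)=\delta(n_1)+\delta(n_2)-1$, and one of $n_1$, $n_2$, $n_1+n_2$ has $\delta\not\equiv 0\pmod m$. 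This construction — not a scan of sparse candidates — is the missing idea, and without it the proof is incomplete.
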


\begin{proof}
  %We prove the first statement.
By Lemma~\ref{LeQA} there exists a minimal $r_0$ such that $n\mapsto s_\varphi(qn)$ and $n\mapsto s_\varphi(pn)$ are $r$-quasiadditive for all $r\geq r_0$.
Therefore, the integers $n_i = F_{r_0+1+ i(2r_0+2)}$, for $i=0,\ldots,m-1$ are pairwise $r_0$-separated and we find $n_i = S^{i(r_0+2)}(n_0)$.
This gives by the $r_0$ quasiadditivity of $n\mapsto s_\varphi(qn)$ and $n\mapsto s_\varphi(pn)$,
  \begin{align*}
    s_{\varphi}(q(n_0+\ldots+n_{m-1})) \equiv m s_{\varphi}(q n_0) \equiv 0 \bmod m,\\
    s_{\varphi}(p(n_0+\ldots+n_{m-1})) \equiv m s_{\varphi}(p n_0) \equiv 0 \bmod m.
  \end{align*}
This concludes the proof of the first part, by choosing $r=r_0+1$ and $n' = n_0 + \ldots + n_{m-1}$.

  The main idea for the proof of the second statement is to find positive integers $n_1,n_2$ such that
  \begin{align*}
    s_\varphi(q(n_2+n_1)) + 1 &= s_\varphi(q n_2) + s_\varphi(q n_1),\\
    s_\varphi(p(n_2+n_1)) &= s_\varphi(p n_2) + s_\varphi(p n_1),
  \end{align*}
   and $v(n_1,r) = v(n_2,r) = v(n_2+n_1,r) = 0$, where $r = r_0+1$.
   This shows the existance of some $\tilde{n}$ such that
   \begin{align*}
      s_\varphi(q\tilde{n}) &\not \equiv s_\varphi(p\tilde{n}) \bmod m
   \end{align*}
   where $v(\tilde{n},r) = 0$. 
   We decompose $\tilde{n}$ into indecomposable parts and an argument by contradiction shows immediately that one of these parts has the desired property.
   
   It is sufficient to choose $k$ and $n_1,n_2$ such that 
   \begin{itemize}
    \item  $v(qn_2,k+4) = F_{k+1}$ and $v(pn_2,k+1) = 0$
    \item  $F_k \leq q n_1 < F_{k+1}$ and $p n_1 < F_k$:
   \end{itemize}
   We find that $\varepsilon_i(pn) = \varepsilon_i(p n_2) + \varepsilon_i(p n_1)$ as $pn_2$ and $pn_1$ are $0$ separated at position $k+1$.
   Furthermore, we find $\varepsilon_i(qn) = \varepsilon_i(q n_2) + \varepsilon_i(q n_1)$ for all $i \in \mathbb{N} \setminus\{k,k+1,k+2,k+3\}$ and the digits at position $k+3,k+2,k+1,k$ 
   are $(0,1,0,0)$, $(0,0,1,0)$ and $(0,0,0,1)$ for $qn, qn_2$ and $qn_1$ respectively.\\
   For parity reasons, we ask for $v(qn_2,k+5) = F_{k+1}$ instead of $v(q n_2, k+4) = F_{k+1}$.
  
  Since $(F_n)_{n\in \mathbb{N}}$ is periodic modulo $q$, we can choose $n_1' > 2 F_r p$ and $k\in \mathbb{N}$ with $\varphi^{k} > 2 q$ such that $n_1' \cdot q = F_k$.
  We choose $n_1 = n_1' + n_1''$ such that $v(n_1,r) = 0$ and $n_1'' < F_r$.
  The condition $n_1' > 2 F_r p$ assures that $F_k \leq q n_1 < F_{k+1}$ and $p n_1 < F_k$.
  
  %It remains to choose $n_2$ such that $v(q n_2, k+5) = F_{k+1}$ and $v(n,\ell) = 0$ for some $\ell$ such that $\varphi^{\ell-1} > 2 q$.\\
  We use again the following identity
  \begin{align*}
    \varphi F_k = F_{k+1} - \left( -\frac{1}{\varphi} \right)^{k}.
  \end{align*}
  Thus, we find
  \begin{align*}
    R_{k+5}(F_{k+1}) + \Z  &= -\frac{1}{\varphi^{k+1}} + \left[-\frac{1}{\varphi^{k+4}},\frac{1}{\varphi^{k+5}}\right) + \Z\\
&\subseteq \left[-\frac{1}{\varphi^{k}},\frac{1}{\varphi^{k+1}}\right) + \Z = R_{k+1}(0) + \Z .
  \end{align*}

  Let $I$ be the representative of $R_{k+5}(F_{k+1}) \bmod 1$ in $[-\frac{1}{2},\frac{1}{2})$.
  Thus we find $I \subset \left[-\frac{1}{\varphi^{k}},\frac{1}{\varphi^{k+1}}\right)$.
  We denote by $I' \coloneqq I \cdot \frac{1}{q} \subset \left[-\frac{1}{q \varphi^k}, \frac{1}{q \varphi^{k+1}}\right)$.
  As $\{n \varphi\}$ is dense in $[0,1)$, we find $n_2$ such that $\langle n_2 \varphi \rangle \in I' \subset R_{k+1}(0)$.\\
  By construction, we find $\langle q n_2 \varphi \rangle \in I$ and $v(n_2,k+1) = 0$.
  $\langle q n_2 \varphi \rangle \in I$ gives $v(q n_2, k+5) = F_{k+1}$ and, therefore, $v(q n_2, k+4) = F_{k+1}$.
  Since $v(n_2,k+1) = 0$ and $\varphi^k > 2q$ we can apply Lemma~\ref{lem:multiplication_low_digits}, which gives $v(p n_2,k+1) = 0$ as required.
\end{proof}

\section{Generating Functions}\label{sec3}
In this section, let $q>p \geq 2$ be integers and let $f(n)$ be defined by
\begin{equation}\label{eqDeffn}
f(n) = s_\varphi(pn) + s_\varphi(qn).
\end{equation}
By Lemma~\ref{LeQA} the function $f$ is quasi-additive.

Next we fix a finite subset $L = \{\ell_1, \ell_2, \ldots, \ell_d\}$ of positive integers with the property
$\ell_1 \le 2r$ and $\ell_j + 1 < \ell_{j+1} \le \ell_j + 2r +1$, $1\le j < d$, and consider the generating function
\[
H_L(x,z) = \sum_{k\ge 3} x^k \sum_{F_{k-1} \le n < F_k} z^{f(n + N_L(k))},
\]
where $N_L(k)$ is given by
\[
N_L(k) = \sum_{\ell\in L} F_{k+\ell} = \sum_{j=1}^d F_{k+\ell_j}.
\]
Furthermore, let $\mathcal{B}'$ be the set of positive integers $n$ 
that have no decomposition of the form $n= n_1+n_2$, where $n_1$ and $n_2$ are non-zero and
$r$-separated (at some position $k$) and $\mathcal{B} = \{n \in \mathcal{B}': v(n,r+1) = F_r\}$. Then we set
\[
B(x,z) = \sum_{n\in \mathcal{B}} x^{\ell(n)} z^{f(n)}
\]
and 
\[
B_L(x,z) = \sum_{n\in \mathcal{B}} x^{\ell(n)} z^{f(n + N_L(\ell(n)))},
\]
where $\ell(n) = k$ if $F_{k-1} \le n < F_k$.
The generating functions $B'(x,z)$ and $B_L'(x,z)$ are defined analogously.
The generating function $H(x,z)$ can be expressed in the following form
(compare with \cite{KW2016}).

\begin{lemma}\label{LeHLrep}
Suppose that $f(n)$ is quasi-additive with respect to the
Zeckendorf expansion for some integer $r\ge 1$. Then we have
\[
H_L(x,z) = B_L'(x,z) + \frac{B'(x,z)B_L(x,z)}{1-x-x^{r+1} B(x,z)}.
\]
\end{lemma}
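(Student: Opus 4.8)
The plan is to obtain the identity by spelling out the combinatorial decomposition of positive integers that the right‑hand side encodes, and then reading it off on generating functions (in the spirit of \cite{KW2016}). The key tool is the \emph{canonical block decomposition}: every positive integer $n$ has a unique representation $n=b_1+b_2+\cdots+b_s$, $s\ge 1$, where the blocks are ordered so that the digits of $b_i$ occupy strictly lower positions than those of $b_{i+1}$, consecutive blocks $b_i,b_{i+1}$ are $r$‑separated, and no $b_i$ is a sum of two non‑zero $r$‑separated integers; equivalently, one cuts the Zeckendorf word of $n$ at each interior run of at least $2r+1$ zeros. Uniqueness is immediate from this description, and every $b_i$ lies in $\mathcal{B}'$. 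The case $s=1$, i.e.\ $n\in\mathcal{B}'$, is immediate: such $n$ contribute $x^{\ell(n)}z^{f(n+N_L(\ell(n)))}$, which is the generic term of $B_L'(x,z)$; the rational summand will account for all $n$ with $s\ge 2$.

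Fix now $n$ with $s\ge 2$. The first step is to split the exponent $f(n+N_L(\ell(n)))$. Because $\ell_1\le 2r$ and the gaps in $L$ are at most $2r+1$, all digits of $N_L(\ell(n))$ lie strictly above the digits of $n$ but within distance $2r$ of the top digit of $n$; thus $N_L(\ell(n))$ is glued to the top block $b_s$, while $b_1+\cdots+b_{s-1}$ and $b_s+N_L(\ell(n))$ are $r$‑separated (the gap between $b_{s-1}$ and $b_s$ is at least $2r+1$ wide). Iterating the quasi‑additivity of $f$ from Lemma~\ref{LeQA} gives
\[
 f\bigl(n+N_L(\ell(n))\bigr)=f(b_1)+f(b_2)+\cdots+f(b_{s-1})+f\bigl(b_s+N_L(\ell(n))\bigr).
\]
The second step is to normalise all blocks except $b_1$. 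Each of $b_2,\dots,b_{s-1}$ and the composite $b_s+N_L(\ell(n))$ has all digits below position $r$ equal to zero, since it sits above a block and a gap of length $\ge 2r+1$; applying $f(m)=f(S(m))$ with $v(m,r)=0$ repeatedly in reverse lowers each of them to an integer with lowest digit exactly at position $r$, without altering $f$. For $b_2,\dots,b_{s-1}$ these limits are elements of $\mathcal{B}$; for $b_s+N_L(\ell(n))$ one obtains $\widetilde b_s+N_L(\ell(\widetilde b_s))$ with $\widetilde b_s\in\mathcal{B}$, the argument of $N_L$ transforming correctly because $S$ moves every position up by one. The bottom block $b_1$ is left alone in $\mathcal{B}'$; it cannot be shifted down, its lowest digit being that of $n$.

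It remains to convert this into generating functions. Tracking $x^{\ell(\cdot)}$ through the resulting bijection, $b_1$ contributes a generic term of $B'(x,z)$; each normalised middle block contributes a generic term of $B(x,z)$ preceded by a gap, each unit of which (beyond the part absorbed into the offset between position $2$ and the reference position $r$ of $\mathcal{B}$) is one factor $x$; and the normalised top block together with $N_L$ contributes a generic term of $B_L(x,z)$, again preceded by a gap. Summing over $s\ge 2$ and over all gap lengths turns the middle blocks and the gaps into the geometric series $\sum_{t\ge 0}\bigl(x+x^{r+1}B(x,z)\bigr)^{t}=\bigl(1-x-x^{r+1}B(x,z)\bigr)^{-1}$, sandwiched between the contributions of $b_1$ and of the top block; adding the $s=1$ term $B_L'(x,z)$ yields the closed form displayed in the statement.

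I expect the bookkeeping, rather than any single idea, to be the main obstacle. Concretely, one must (a) establish that the block decomposition is a genuine bijection, pin down the exact description of $\mathcal{B}$ and $\mathcal{B}'$ as sets of admissible $\{0,1\}$‑words, and track the $x$‑exponents through the bijection consistently; (b) verify at each invocation of $f(m)=f(S(m))$ that the hypothesis $v(m,r)=0$ really holds, in particular for the composite $b_s+N_L(\ell(n))$ and for every intermediate integer produced while shifting down (which is where one needs $r\ge 2$); and (c) check that $N_L$ never extends downward as far as $b_{s-1}$ — exactly what $\ell_1\le 2r$ and $\ell_{j+1}\le\ell_j+2r+1$ guarantee — so that $N_L$ stays attached to a single block; this last point is the reason the numerator carries $B_L$, the top block normalised into $\mathcal{B}$, and not $B_L'$. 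Granting these, the identity follows by summing the geometric series.
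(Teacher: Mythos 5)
Your proposal is correct and follows essentially the same route as the paper's proof: decompose each $n$ into $r$-separated indecomposable blocks, with the lowest block an element of $\mathcal{B}'$, the remaining blocks normalised (via the shift $S$) into elements of $\mathcal{B}$, and the top block carrying $N_L(\ell(n))$; then split $f$ by quasi-additivity and sum the resulting geometric series in $x+x^{r+1}B(x,z)$. The paper's own argument is exactly this decomposition, stated more tersely, so there is nothing to add beyond the bookkeeping you already flag.
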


\begin{proof}
The generating function $H_L$ is a functional of all Zeckendorf expansions of 
$n + N_L(\ell(n))$,
where $n$ runs over all positive integers, that is,
we always add the digits corresponding to $N_L(\ell(n))$.
The expansion of every $n \notin \mathcal{B}'$ can be decomposed in the following way.
They start with an expansion corresponding to an element in $\mathcal{B}'$ (which may start with less than $r$ zeros).
Then there is a possibly empty sequence of pairs consisting of a sequence of at least $r+1$ zeros followed by an expansion corresponding to an element in $\mathcal{B}$ 
(which start with exactly $r$ zeros).
Finally there is an element of $\mathcal{B}$ followed by digits corresponding to $L$.
This means that $n = n_0 + S^{k_1}(n_1) + \ldots + S^{k_t}(n_t)$ where $n_0 \in \mathcal{B}', n_1,\ldots,n_{t} \in \mathcal{B}, k_i \geq r+1$ and the $S^{k_i}(n_i)$
are pairwise $r$-separated and by the quasi-additivity of $f$: $f(n+ N_L(\ell(n))) = f(n_0) + f(n_1) + \ldots + f(n_{t-1}) + f(n_{t} + N_L(\ell(n_{t})))$.

When $n$ already belongs to $\mathcal{B}'$, we do not decompose it at all.
This gives,
\begin{align*}
H_L(x,z) &= B_L'(x,z) + B'(x,z) \frac 1{1- B(x,z) \frac{x^{r+1}}{1-x} } B_L(x,z)\\
  &= B_L'(x,z) + \frac{B'(x,z) B_L(x,z)}{1-x-x^{r+1} B(x,z)}.
\end{align*}
This proves the lemma.
\end{proof}

The next lemma gives a quantitative bound on how many elements there are in $\mathcal{B}$.

\begin{lemma}\label{LeB}
Assume that $r\geq 2$ and let $1<\varphi_r< \varphi$ be the solution of the equation
\[
1 - \frac 1{\varphi_r} - \frac 1{\varphi_r^2} + \frac 1{\varphi_r^{2r+2}} = 0.
\]
Then we have
\begin{align*}
\# \{ n \in \mathcal{B}: \ell(n) = k \} = O\left( \varphi_r^k \right),\\
\# \{ n \in \mathcal{B}': \ell(n) = k \} = O\rb{\varphi_r^k}.
\end{align*}
\end{lemma}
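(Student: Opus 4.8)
The plan is to count the elements $n\in\mathcal B$ with $\ell(n)=k$ directly in terms of forbidden patterns in the Zeckendorf digit string, and then to extract the exponential growth rate via the transfer matrix / generating function associated to the allowed words. Recall that $n\in\mathcal B$ with $\ell(n)=k$ means: (i) $F_{k-1}\le n<F_k$, so $\varepsilon_k(n)=1$ and $\varepsilon_{k+1}(n)=0$; (ii) $v(n,r+1)=F_r$, i.e.\ the lowest digits are exactly $\varepsilon_r(n)=1$, $\varepsilon_i(n)=0$ for $2\le i<r$ and (by Zeckendorf) $\varepsilon_{r+1}(n)=0$; (iii) $n$ is \emph{indecomposable}, meaning there is no index $j$ with $r\le j$ such that $\varepsilon_i(n)=0$ for all $j-r\le i\le j+r$ — equivalently, in the digit string $\varepsilon_{r+2}(n)\varepsilon_{r+3}(n)\cdots\varepsilon_{k-1}(n)$ there is no run of $2r+1$ consecutive zeros.

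First I would reduce to a pure word-counting problem: the number of $n\in\mathcal B$ with $\ell(n)=k$ equals the number of binary strings $w=w_{r+2}w_{r+3}\cdots w_{k-1}$ of length $k-r-2$ that (a) contain no two consecutive $1$s (Zeckendorf condition), (b) contain no block of $2r+1$ consecutive $0$s, with the boundary conventions coming from the forced digits $\varepsilon_r=1$, $\varepsilon_{r+1}=0$ below and $\varepsilon_k=1$, $\varepsilon_{k+1}=0$ above — these boundary $1$s mean the "no $2r+1$ zeros" window must also be respected across the ends, but this only changes things by a bounded factor. So up to a constant, $\#\{n\in\mathcal B:\ell(n)=k\}$ is the number of length-$\Theta(k)$ binary words avoiding $11$ and $0^{2r+1}$. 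The generating function for such words (counted by length) is a rational function $P(x)/Q(x)$ whose denominator, by the standard transfer-matrix argument for words avoiding a finite set of factors, has smallest positive root $\rho_r$ equal to $1/\varphi_r$, where $\varphi_r$ is the dominant root of the reciprocal polynomial. A direct computation: a word avoiding $11$ decomposes uniquely into blocks $0^{a_1}1\,0^{a_2}1\cdots$ with $a_i\ge 1$ internally, and forbidding $0^{2r+1}$ caps each run at $2r$, giving the per-block generating function $x(x+x^2+\cdots+x^{2r}) = x\cdot\frac{x-x^{2r+1}}{1-x}$; summing the geometric series yields denominator $1-x\cdot\frac{x-x^{2r+1}}{1-x} = \frac{1-x-x^2+x^{2r+2}}{1-x}$, so the singularity is at the root of $1-x-x^2+x^{2r+2}=0$. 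Substituting $x=1/\varphi_r$ gives exactly $1-\frac1{\varphi_r}-\frac1{\varphi_r^2}+\frac1{\varphi_r^{2r+2}}=0$, matching the hypothesis, and one checks $1<\varphi_r<\varphi$ since the perturbation term $x^{2r+2}$ pushes the root of $1-x-x^2$ (namely $1/\varphi$) slightly toward $0$ in the relevant range.

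Concretely the steps in order are: (1) translate membership in $\mathcal B$ with $\ell(n)=k$ into the word-avoidance description above, tracking the fixed boundary digits and absorbing their effect into $O(1)$ factors; (2) write down the generating function $\sum_k \#\{n\in\mathcal B':\ell(n)=k\}x^k$ using the block decomposition, obtaining a rational function with denominator proportional to $1-x-x^2+x^{2r+2}$ (for $\mathcal B$ one factors out the extra forced $1$ at the bottom, which only multiplies by a power of $x$ and does not move the singularity); (3) apply the standard analytic/linear-recurrence estimate: the coefficients of a rational function $P(x)/Q(x)$ with a simple dominant pole at $x=1/\varphi_r$ grow like $C\varphi_r^k+O(\varphi_r'^k)$ with $\varphi_r'<\varphi_r$, hence are $O(\varphi_r^k)$; (4) verify $1<\varphi_r<\varphi$ and that $\varphi_r$ is indeed the unique root of the stated equation in that interval, so the $\varphi_r$ appearing here coincides with the one in the statement.

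The main obstacle I expect is \textbf{the bookkeeping at the two boundaries of the digit string}: the conditions $F_{k-1}\le n<F_k$ and $v(n,r+1)=F_r$ force specific digits at the top and bottom, and the indecomposability condition (no $2r+1$ consecutive zeros) interacts with these forced $1$s near the ends — one must be careful that a run of zeros straddling position $r$ or position $k$ is correctly counted, and that "indecomposable" is exactly "no admissible separation position in range", which for $n$ with $\ell(n)=k$ and the bottom digits fixed amounts precisely to the interior no-$0^{2r+1}$ condition. Once this combinatorial translation is pinned down, the generating-function computation and the coefficient asymptotics are routine, and the same argument handles $\mathcal B'$ (which is $\mathcal B$ without the bottom-digit constraint, i.e.\ the same rational function up to a bounded factor), yielding both bounds $O(\varphi_r^k)$ simultaneously.
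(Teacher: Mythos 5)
Your proposal is correct and follows essentially the same route as the paper: both reduce the count to binary words avoiding $\tL\tL$ and $\tO^{2r+1}$, decompose them into blocks $\tL\tO^{a}$ with $1\le a\le 2r$ to obtain a rational generating function with denominator $1-x-x^2+x^{2r+2}$ (up to a factor $1-x$), and read off the growth rate $O(\varphi_r^k)$ from the dominant singularity at $1/\varphi_r$, handling $\mathcal B$ and $\mathcal B'$ together. One small slip in your sketch: since $1-x-x^2+x^{2r+2}$ is positive at $x=1/\varphi$ and negative just below $x=1$, the perturbation moves the root from $1/\varphi$ toward $1$ (so $\varphi_r<\varphi$), not toward $0$ as you wrote — but this is exactly the routine sign check you defer to, and it goes through.
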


\begin{proof}
We only prove the statement for $\mathcal{B}'$ as $\mathcal{B} \subseteq \mathcal{B}'$.
We are interested in the number of $\tO$-$\tL$-sequences of length $k-2$, starting with $\tL$, with the
property that two adjacent $\tL$s are separated by at least one zero but at most $2r$ zeros.
%that is, the number of $\tO$-$\tL$-sequences of length $k-3$, ending with $\tO$, with the same property.
These sequences can be seen as (possibly empty) concatenations of the words $\tL\tO,\tL\tO\tO,\ldots,\tL\tO^{2r}$ followed by $\tL$ and a (possible empty) concatenation of zeros.
The corresponding generating function is $\frac {x^2}{1-(x^2+\cdots+x^{2r+1})} \frac{x}{1-x}=\frac{x^2-x^3}{1-x-x^2+x^{2r+2}} \frac{x}{1-x}$.
The denominator $1-(x^2+\cdots+x^{2r+1})$ is positive at $x=1/\varphi$ (since $1-1/\varphi^2-1/\varphi^3-\cdots=0$) and negative at $x=1$, 
therefore, there is a unique zero $1/\varphi_r$ in the range $(1/\varphi,1)$, which yields the claim by Pringsheim's Theorem.
\end{proof}

It is an immediate consequence of Lemma~\ref{LeB} that $B(x,z),B_L(x,z), B'(x,z)$ and $B_L'(x,z)$
converge absolutely for all $x,z$ with $\lvert x\rvert < 1/\varphi_r$ and $\lvert z\rvert = 1$.
%In particular it follows that $B(x,z)$ and $B_L(x,z)$ have no singularity in the range $\lvert x\rvert \leq 1/\varphi$ if $\lvert z\rvert = 1$.

\section{Proof of Theorem~\ref{Thmain}}\label{sec4}

As mentioned in the introduction it is sufficient to show (\ref{eqmaincond}) which we
will do now in several steps. As above we fix two integers $q>p \geq 1$.

\begin{lemma}\label{Lesec4.1}
There exist $\eta> 0$ such that uniformly for all finite $L$ (as described above)
\[
\sum_{F_{k-1}\le n < F_k} (-1)^{f(n+N_L(k))} = O\left( \varphi^{(1-\eta) k} \right).
\]
\end{lemma}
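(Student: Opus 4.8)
The plan is to exploit the generating-function identity from Lemma~\ref{LeHLrep} together with the exponential-size estimate from Lemma~\ref{LeB}, evaluated at $z=-1$. Write $H_L(x) \coloneqq H_L(x,-1)$, $B(x) \coloneqq B(x,-1)$, and similarly for the primed and $L$-decorated variants; all of these are analytic in $\abs x < 1/\varphi_r$ by the remark following Lemma~\ref{LeB}. The coefficient of $x^k$ in $H_L(x)$ is exactly the sum $\sum_{F_{k-1}\le n<F_k}(-1)^{f(n+N_L(k))}$ that we must bound. By Lemma~\ref{LeHLrep} we have
\[
H_L(x) = B_L'(x) + \frac{B'(x) B_L(x)}{1-x-x^{r+1}B(x)},
\]
so the singularities of $H_L$ inside a disc slightly larger than $\abs x = 1/\varphi$ come only from zeros of the denominator $D(x) \coloneqq 1-x-x^{r+1}B(x)$, together with the radius-of-convergence wall at $1/\varphi_r$ of the building blocks $B, B', B_L, B_L'$. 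Since $\varphi_r < \varphi$, the point $1/\varphi_r$ is already strictly outside $\abs x = 1/\varphi$, so the whole argument only needs to control $D(x)$ in the annulus $1/\varphi \le \abs x \le 1/\varphi_r - \delta$ for some small $\delta>0$ depending on $r$.

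The main obstacle — and the crux of the whole section — is showing that $D(x) = 1-x-x^{r+1}B(x,-1)$ has \emph{no zeros} in the closed disc $\abs x \le 1/\varphi$ other than a possible simple zero, and more importantly that it has no zero on the circle $\abs x = 1/\varphi$ at all, so that $H_L$ extends analytically to a disc of radius $1/\varphi + \varepsilon$. Here is where the content of Lemma~\ref{LeX} enters: that lemma produces, for the modulus $m=2$, integers $n'$ and $n''$ lying in $\mathcal B$ (they are indecomposable and satisfy $v(\cdot,r)=0$, hence after the shift normalization $v(\cdot,r+1)=F_r$) with $f(n')$ even and $f(n'')$ odd. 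This guarantees that $B(x,-1)$ is not identically equal to the "trivial" generating function one would get if $(-1)^{f(n)}$ were constant on $\mathcal B$; concretely it forces some cancellation that should be turned into a strict inequality $\abs{x^{r+1}B(x,-1)} < \abs{1-x}$ on $\abs x = 1/\varphi$ — or, if equality can occur at isolated points, a more delicate argument ruling out $D(x)=0$ there. I would try first the crude route: compare $\abs{x}^{r+1}\sum_{n\in\mathcal B}\abs x^{\ell(n)-\,?}$ with $\abs{1-x}$; if that is not quite enough at $x=1/\varphi$ itself (it may be exactly the borderline, since without sign cancellation $D(1/\varphi)=0$), then use the existence of $n'$ \emph{and} $n''$ to show the triangle inequality for $x^{r+1}B(x,-1)$ is strict, i.e. the terms of $B(x,-1)$ genuinely point in different directions, which gives $\abs{D(x)}>0$ on the whole circle.

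Granting that $D$ has no zero on $\abs x \le 1/\varphi$, Pringsheim/compactness gives a $\rho \in (1/\varphi, 1/\varphi_r)$ with $D(x)\neq 0$ and all the block generating functions analytic for $\abs x \le \rho$, hence $H_L$ is analytic on $\abs x \le \rho$. The final step is a routine Cauchy coefficient estimate: for the coefficient $a_k$ of $x^k$ in $H_L$,
\[
\abs{a_k} = \left\lvert \frac{1}{2\pi i}\oint_{\abs x = \rho} \frac{H_L(x)}{x^{k+1}}\,dx \right\rvert \le \frac{\max_{\abs x=\rho}\abs{H_L(x)}}{\rho^{k}} = O\!\left(\rho^{-k}\right) = O\!\left(\varphi^{(1-\eta)k}\right)
\]
with $\eta>0$ determined by $\rho^{-1} = \varphi^{1-\eta}$, i.e. $\eta = 1 - \log(1/\rho)/\log\varphi > 0$. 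The uniformity over $L$ is automatic because $B_L$ and $B_L'$ are majorized, coefficient by coefficient in absolute value, by $B$ and $B'$ evaluated with $z$ replaced by $1$ (changing the argument of $N_L$ only permutes which digits are flipped and never increases $\abs{z^{f(\cdot)}}=1$), so $\max_{\abs x=\rho}\abs{H_L(x)}$ is bounded by a constant independent of $L$. I would therefore spend essentially all the effort on the zero-free region for $D(x,-1)$ and present the rest as the short analytic-combinatorics wrap-up sketched here.
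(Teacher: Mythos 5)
Your proposal is correct and follows essentially the same route as the paper: express the sum as the coefficient of $x^k$ in $H_L(x,-1)$ via Lemma~\ref{LeHLrep}, observe that $x_0+x_0^{r+1}B(x_0,1)=1$ is the borderline case, use the two elements $n',n''\in\mathcal{B}$ of opposite parity from Lemma~\ref{LeX} to get the strict bound $\lvert B(x,-1)\rvert<B(x_0,1)$ and hence a zero-free disc of radius $x_1>1/\varphi$ for the denominator, and finish with Cauchy's formula, with uniformity in $L$ coming from the majorization $\lvert B_L(x,-1)\rvert\le B(\lvert x\rvert,1)$.
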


\begin{proof}
We first note that the above sum is just the coefficient of $x^k$ of the function $H_L(x,-1)$.
Thus, we have to show that $H_L(x,-1)$ has no singularities for $\lvert x\rvert \leq 1/\varphi+\varepsilon$ for some $\varepsilon>0$ and apply Cauchy's formula.
For this purpose we use the representation of $H_L(x,z)$ given in Lemma~\ref{LeHLrep}, where $r$ will be chosen later.
As mentioned at the end of Section~\ref{sec3} (after the proof of Lemma~\ref{LeB}) the functions $B(x,-1)$ and $B_L(x,-1)$ have no singularities in the region $\lvert x\rvert< 1/\varphi_r$.
Hence the only possible singularity of $H_L(x,-1)$ in the region $\lvert x\rvert <1/\varphi_r$ could be due to a solution of the equation 
\begin{equation}\label{eqzero}
x+x^{r+1} B(x,-1) = 1.
\end{equation}
We show that there exists $x_1>x_0 := 1/\varphi$ such that this equation does not have a solution for $\lvert x\rvert\leq x_1$, which implies that $H_L(x,-1)$ is uniformly bounded for $\lvert x\rvert\leq x_1$.
We first note that 
\[
x_0 + x_0^{r+1} B(x_0,1) = 1
\]
since the coefficient of $x^k$ of $H_L(x,1)$ is of order $\varphi^{k}$ and, therefore, it is necessary that $H_L(x,1)$ has a singularity at $x=x_0$.

By Lemma~\ref{LeX} there are integers $n', n''$ and $r$ for which $f(n')$ is even and $f(n'')$ is odd and $n',n''\in\mathcal B$.
It follows that there exists $x_1'>x_0$ such that
\begin{equation}\label{eqtoshow}
|B(x,-1)| < B(x_0,1)  \qquad \mbox{for all $|x|\le x_1'$.}
\end{equation}
This further implies that there exists $x_1>x_0$ such that for $\lvert x\rvert\leq x_1$
\begin{align*}
\lvert x+x^{r+1}B(x,-1)\rvert&\leq \lvert x\rvert+\lvert x\rvert^{r+1}\lvert B(x,-1)\rvert\\
&\leq x_1+x_1^{r+1}\lvert B(x,-1)\rvert\\
&< x_0+x_0^{r+1}B(x_0,1)=1.
\end{align*}
Finally, by Cauchy's formula we obtain
\begin{align*}
\left|\sum_{F_{k-1}\le n < F_k} (-1)^{f(n+N_L(\ell(n)))} \right|
&= \left| \frac 1{2\pi i} \int_{|x|= x_1} H_L(x,-1) x^{-k-1}\, dx \right| \\
&\le \max_{|x| = x_1} |H_L(x,-1)| x_1^{-k} \\
&= O\left( \varphi^{(1-\eta) k} \right),
\end{align*}
where $\eta> 0$ is defined by $\varphi^{1-\eta} = x_1^{-1}$.
\end{proof}

With the help of Lemma~\ref{Lesec4.1} we can finally derive the desired upper bound.

\begin{lemma}\label{Lesec4.2}
There exists $\eta> 0$ such that 
\[
\sum_{1 \le n < N} (-1)^{f(n)} = O\left( N^{1-\eta} \right).
\]
\end{lemma}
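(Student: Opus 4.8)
The plan is to reduce the sum over $n<N$ to a bounded number of sums of the type controlled by Lemma~\ref{Lesec4.1}, using the Zeckendorf expansion of $N$ together with the quasi-additivity of $f$. First I would write the Zeckendorf expansion $N = F_{k_1} + F_{k_2} + \cdots + F_{k_t}$ with $k_1 > k_2 > \cdots > k_t \ge 2$ and $k_{i} > k_{i+1}+1$; note that $t = O(\log N)$ and $F_{k_1} \le N < F_{k_1+1}$, so $k_1 = \log_\varphi N + O(1)$. The interval $[1,N)$ decomposes into the blocks $[F_{k_1}+\cdots+F_{k_{i-1}}, F_{k_1}+\cdots+F_{k_i})$ for $i=1,\dots,t$; on the $i$-th block, $n$ has fixed high digits $\varepsilon_{k_1}=\cdots=\varepsilon_{k_{i-1}}=1$ (at positions $k_1,\dots,k_{i-1}$) and the remaining digits range over all legal Zeckendorf strings below $F_{k_i}$. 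Writing $n = (F_{k_1}+\cdots+F_{k_{i-1}}) + n'$ with $0 \le n' < F_{k_i}$, and further splitting the $n'$-range according to $\ell(n')$, i.e.\ $F_{\kappa-1} \le n' < F_\kappa$ for $3 \le \kappa \le k_i$, the high part $F_{k_1}+\cdots+F_{k_{i-1}}$ plays exactly the role of $N_L(\kappa)$ for a suitable finite set $L = L_i(\kappa) = \{k_1-\kappa, \dots, k_{i-1}-\kappa\}$ of positive integers. Here I must check that the gap conditions defining admissible $L$ in Section~\ref{sec3} ($\ell_1 \le 2r$ and $\ell_j+1 < \ell_{j+1} \le \ell_j + 2r+1$) are met; the Zeckendorf no-two-consecutive condition gives $\ell_{j+1}-\ell_j \ge 2$, and the spacing $\le 2r+1$ can be enforced by first replacing $r$ by a large enough value (this is harmless since the quasi-additivity constant $r$ may be taken arbitrarily large, by Lemma~\ref{LeQA}), while $\ell_1 \le 2r$ is arranged by treating the finitely many $n'$ with $\ell(n') > k_i - 2r$ trivially with the $O(1)$ bound on each summand.

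With this reduction in place, the main estimate is immediate: for each fixed $i$ and $\kappa$, the inner sum $\sum_{F_{\kappa-1}\le n' < F_\kappa} (-1)^{f(n' + N_{L_i(\kappa)}(\kappa))}$ is $O(\varphi^{(1-\eta)\kappa})$ uniformly in $L$ by Lemma~\ref{Lesec4.1}. Summing the geometric series over $\kappa \le k_i$ gives $O(\varphi^{(1-\eta)k_i})$ for the $i$-th block, and since $k_i \le k_1 = \log_\varphi N + O(1)$ we get $O(\varphi^{(1-\eta)k_1}) = O(N^{1-\eta})$ per block. Finally, summing over the $t = O(\log N)$ blocks costs only a logarithmic factor, which is absorbed by slightly shrinking $\eta$: $O\big((\log N)\, N^{1-\eta}\big) = O(N^{1-\eta'})$ for any $\eta' < \eta$. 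This yields the claimed bound.

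The main obstacle I anticipate is bookkeeping rather than a genuine difficulty: one has to be careful that the "added digits" from the high part of $N$ together with the fixed $r$ zeros separating the blocks of $n'$ really do satisfy the hypotheses under which $H_L$ was defined and Lemma~\ref{LeHLrep} / Lemma~\ref{Lesec4.1} apply — in particular that the quasi-additive decomposition $f(n) = f(\text{low part}) + f(\text{high part})$ is valid, which requires the separation between the low digits of $n'$ and the low digit $k_{i-1}$ of the high part to exceed $r$. This forces me to exclude, on each block, the $O(1)$ (per block) values of $n'$ with $\ell(n') \in (k_i - 2r, k_i]$, and also to check the case $i=1$ (empty high part, $L = \emptyset$, $N_L \equiv 0$) separately, where the statement is just the original sum $\sum_{n<F_{k_1}}(-1)^{f(n)}$ handled directly by Lemma~\ref{Lesec4.1} with $L = \emptyset$. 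None of this affects the final power of $N$, only the implied constant and an arbitrarily small loss in the exponent, so I would state the argument at the level of the block decomposition and the geometric-series estimate and leave these finite exceptional sets to a remark.
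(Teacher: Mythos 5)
Your overall strategy --- decompose $[1,N)$ into blocks according to the Zeckendorf expansion of $N$, identify the high digits of $N$ with a set $L$ shifted to the scale of the low part, and invoke Lemma~\ref{Lesec4.1} uniformly in $L$ --- is exactly the paper's. However, the two places where you dispose of the admissibility conditions on $L$ both fail as written, and the missing ingredient in both cases is the same: the quasi-additive splitting of $f$ across large gaps (Lemma~\ref{LeQA}), which is precisely what the first half of the paper's proof of this lemma supplies.

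First, the gaps $k_{j-1}-k_j$ in the Zeckendorf expansion of $N$ can be arbitrarily large, so the condition $\ell_{j+1}\le \ell_j+2r+1$ cannot be ``enforced by replacing $r$ by a large enough value'': $r$ must be fixed before $N\to\infty$, because $\eta$ in Lemma~\ref{Lesec4.1} depends on $r$ (through $\varphi_r$ and the radius $x_1$), so letting $r$ grow with $N$ lets $\eta$ degenerate. The correct move is that whenever $\ell_{j+1}>\ell_j+2r+1$ the two pieces of $N_L(k)$ are $r$-separated, so quasi-additivity splits off $f$ of the distant piece as a constant sign that factors out of the sum. Second, the set of $n'$ you propose to discard is both misidentified and far from $O(1)$ per block: the condition $\ell_1=k_{i-1}-\ell(n')\le 2r$ fails for the \emph{small} $n'$, namely those with $\ell(n')<k_{i-1}-2r$, and these form a positive proportion of the block $[0,F_{k_i})$ (all of it, when $k_{i-1}-k_i>2r$); even the set you name, $\{n':\ell(n')\in(k_i-2r,k_i]\}$, has about $(1-\varphi^{-2r})F_{k_i}$ elements. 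Bounding either set trivially by $O(1)$ per term contributes $\Theta(F_{k_i})$, which for $N=F_k+F_{k-2}+F_{k-4}+\cdots$ sums to $\Theta(N)$ and destroys the result. The repair, again, is quasi-additivity: for those $n'$ the low and high parts are $r$-separated, so $f(n'+N_L)=f(n')+f(N_L)$ and one applies Lemma~\ref{Lesec4.1} with $L=\emptyset$, while for the remaining $n'$ (with $\ell(n')\ge k_{i-1}-2r$) one applies it directly with the shifted $L$. With these two corrections your argument coincides with the paper's proof; the final logarithmic loss you absorb by shrinking $\eta$ is harmless but avoidable, since $\sum_j\varphi^{(1-\eta)k_j}=O(\varphi^{(1-\eta)k_1})$ already because the $k_j$ are distinct and decreasing.
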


\begin{proof}
First we observe that Lemma~\ref{Lesec4.1} implies that we also have 
\begin{equation}\label{eqX}
\sum_{1\leq n < F_k} (-1)^{f(n+N_L(k))} = O\left( \varphi^{(1-\eta) k} \right).
\end{equation}
uniformly for all finite sets $L = \{\ell_1, \ell_2, \ldots, \ell_d\}$ of positive integers with the property
$\ell_1 \le 2r$ and $\ell_j + 1 < \ell_{j+1}$, $1\le j < d$.

To see this, assume first that there is some $j$ with $\ell_{j+1} > \ell_j + 2r+1$.
In this case we can apply Lemma~\ref{LeQA} and split the contribution of $N_L(k)$ into at least two parts so that we can restrict ourselves to the situation covered by Lemma~\ref{Lesec4.1}.
Secondly, we partition the sum over $n< F_k$ into several subsums:
\[
\sum_{n < F_k} (-1)^{f(n+N_L(k))} = \sum_{j\le k} \sum_{F_{j-1}\le n < F_j} (-1)^{f(n+N_L(k))}.
\]
For $j> k - 2r-1$ we can directly apply Lemma~\ref{Lesec4.1} by shifting the corresponding sets $L$ accordingly.
Finally for $j\le k-2r-1$ we apply first Lemma~\ref{LeQA} and separate the contribution of $f(n + N_L(k))$ into $f(n) + f(N_L(k))$ and apply Lemma~\ref{Lesec4.1} for $L = \emptyset$.
This gives
\[
\sum_{n < F_k} (-1)^{f(n+N_L(k))} = O\left( \sum_{j\le k} \varphi^{(1-\eta) j} \right) =
O\left( \varphi^{(1-\eta) k} \right). 
\]

In order to complete the proof of Lemma~\ref{Lesec4.2} we consider the Zeckendorf expansion of $N$:
\[
N = \sum_{j=1}^J  F_{k_j},
\]
where $k_j > k_{j+1} +1$ (for $1\le j< J$). Furthermore we set $L_1 = \emptyset$ and
$L_j = \{ k_{j-1}-k_j, k_{j-2}-k_j,\ldots, k_1-k_j \}$ for $2\le j \le J$. Then we have
\[
\sum_{n< N} (-1)^{f(n)} = \sum_{j=1}^J \sum_{n < F_{k_j}} (-1)^{f(n + N_{L_j}(k_j))}.
\]
Since (\ref{eqX}) holds uniformly for all possible finite sets $L$ we thus obtain
\[
\sum_{n< N} (-1)^{f(n)} = O\left( F_{k_1}^{1-\eta} \right) = O\left( N^{1-\eta} \right),
\]
which completes the proof of the lemma.
\end{proof}

\section{Possible extensions}
We proved that the infinite word $s_\varphi$ modulo $2$ satisfies a M\"obius Randomness Principle (MRP).
This gives rise to several questions on possible extensions of this result. 
For example, is it true that every sequence observed by the symbolic dynamical system associated with $s_\varphi$ modulo $2$ satisfies the Sarnak conjecture?
Moreover, it is certainly possible to extend our results to the case $s_\varphi$ modulo $m$ for arbitrary $m\geq 2$.

Finally, we want to mention some possible directions of further research:
\begin{enumerate}
\item Prove a MRP for $(-1)^{s_\alpha(n)}$, where $s_\alpha$ is the Ostrowski sum-of-digits function (for arbitrary irrational $\alpha$), 
generalizing the case $\alpha=\varphi$ (see~\cite{Berthe2001} for a survey on the Ostrowski numeration system). 
We expect that the methods of this paper can be used at least for the case of quadratic irrational $\alpha$.
%\item Prove a MRP for certain $q$-multiplicative functions~\cite{BS1948}.
\item Prove a MRP for sequences that are automatic with respect to the Zeckendorf numeration (they appeared under the name of Fibonacci-automatic sequences in \cite{fibonacci_automatic}).
It is expected that one can find a similar decomposition for these automata as in \cite{Muellner2017} and use the techniques that were developed in this paper.
\item Prove a MRP for general morphic sequences, that is, sequences that are projections of a fixed point of a morphism.
	This is probably the most difficult generalization and will likely need some new ideas.
	However, the methods used in this paper seem to fit this framework rather well.

%Possible TODO: Linearly recurrent numeration system.
\end{enumerate}

\bibliographystyle{plain}
\bibliography{moebius}

%\begin{thebibliogaphy}{00}

%...

%\end{thebibliogaphy}

%\pagebreak

\end{document}